\def\F{\mathbb F}
\def\N{\mathbb N}
\def\cB{\mathcal B}
\def\cC{\mathcal C}
\def\cD{\mathcal D}
\def\cI{\mathcal I}
\def\cC{\mathcal C}
\def\cP{\mathcal P}
\newcommand{\im}{{\sf img}}
\newcommand{\Stab}{{\sf Stab}}
\newcommand{\SL}{{\sf SL}}
\newcommand{\GL}{{\sf GL}}
\newcommand{\Aut}{{\sf Aut}}
\newcommand{\la}{\langle}\newcommand{\ra}{\rangle}
\newcommand{\ov}{\overline}
\newcommand{\ds}{\displaystyle}
\newtheorem{theorem}{Theorem}[section]
\newtheorem{lemma}[theorem]{Lemma}
\newtheorem{proposition}[theorem]{Proposition}
\newtheorem{definition}[theorem]{Definition}
\newtheorem{example}[theorem]{Example}
\newtheorem{remark}[theorem]{\sf\em Remark}
\newtheorem{question}[theorem]{\sf\em Question}
\begin{document}
	
	\title[On $\Aut(\cD)$ of $2$-designs $\cD$ constructed from $\F_{2^n}$]{On the full automorphism groups of $2$-designs constructed from finite fields $\F_{2^n}$
	}

\date{\today}

\author{Tung Le and B.~G.~Rodrigues}

\email{T.L.: Lttung96@yahoo.com}
\email{B.G.R.: bernardo.rodrigues@up.ac.za}

\address{T.L: Department of Mathematics and Applied Mathematics, University of Pretoria, Hatfield 0028, Pretoria, South Africa}
\address{B.G.R.: Department of Mathematics and Applied Mathematics, University of Pretoria, Hatfield 0028, Pretoria, South Africa}

\begin{abstract}
{\small 
	In this manuscript, for $q:=2^n$ with $n\geq2$, we study two primitive maximal subgroups of the alternating group ${\sf A}_{q-1}$. These  subgroups are the full automorphism groups of $2$-designs which are constructed from algebraic curves 
	over the finite field  $\F_q$. 
}
\end{abstract}

\keywords{Automorphism, design, finite field, finite group, simple group, symmetric group}

\subjclass[2010]{05B05, 12E05, 20B15, 20B25, 20B35, 20D06}

\maketitle

\section{Introduction}
Combinatorial designs with equal parameters to those of classical designs
have been studied extensively, and in all cases from a geometric perspective. Here we present a perspective on which algebraic curves defined over finite field $\F_q$  provide an interplay between finite algebraic geometry and finite discrete structures, namely combinatorial design theory. In details, we describe the designs defined by parabolas and hyperbolas respectively, and 
study the structure of their full automorphism groups in the context of the symmetric group ${\sf Sym}(\F_q^\times)$.

With $q := 2^n$ where $n\in\N_{\geq2}$, for every subset $S \subseteq \F_q,$  we write $S^\times: = S - \{0\}.$ For each $a \in \F_q^\times$, we define the following parabola and hyperbola \begin{center} $U_a^q :=\{x^2 + ax \mid x \in \F_q\}$ and $O_a^q:=\{x+ax^{-1}\mid x\in\F_q^\times\}.$ \end{center}  We prove the following main result.

\begin{theorem}
	\label{thm:main1}
	For $q:=2^n$ where $n\in\N_{\geq2}$, let $\cD_q^u:=(\F_q^\times, \cB_q^u)$ and $\cD_q^o =(\F_q^\times, \cB_q^o)$ be two incident structures, where $\cB_q^o:=\{{(O_a^q)}^\times: a \in \F_q^\times\}$ and $\cB_q^u:=\{{(U_a^q)}^\times: a \in \F_q^\times\}.$ The following results hold.
	\begin{itemize}
		\item[(i)] 
		$\cD_q^u$ and $\cD_q^o$ are 
		$2$-$(q-1, \frac{q}{2}-1, \frac{q}{4}-1)$ designs. Moreover, $\cD_q^o$ is design-isomorphic to the dual of $\cD_q^u$.  Thus, the two full automorphism groups $\Aut(\cD_q^u)$ and $\Aut(\cD_q^o)$ are isomorphic.
		
		\item[(ii)] $\Aut(\cD_q^u)= \GL_{\F_2}(\F_q)\cong\GL_n(2)$, where $\F_q$ is considered as an $\F_2$-vector space. 
		
		\item[(iii)] Both $\Aut(\cD_q^u)$ and $\Aut(\cD_q^o)$ are primitive maximal subgroups of the alternating group ${\sf Alt}(\F_q^\times)$. 	Furthermore, 
		let $H$ be either $\Aut(\cD_q^u)$ or $\Aut(\cD_q^o)$,
		the stabilizer of a point and the stabilizer of a block in $H$ are both isomorphic to $2^{n-1}{:}\GL_{n-1}(2)$.
		\begin{itemize}
			\item If $n=2$, then these two stabilizers are conjugate in $H$.
			\item If $n\geq3$, then these two stabilizers are not conjugate in $H$.
		\end{itemize}		
	\end{itemize}	
\end{theorem}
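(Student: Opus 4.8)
The plan is to prove (i), then (ii), then (iii), with essentially all of the real work concentrated in (ii) and in the maximality half of (iii). Part (i) runs entirely on a \emph{trace criterion}. Writing $\operatorname{Tr}\colon\F_q\to\F_2$ for the absolute trace, one has for $u\in\F_q^\times$ that $u\in U_a^q$ iff $x^2+ax+u=0$ has a root in $\F_q$, equivalently (divide by $a^2$ and substitute $x\mapsto ax$) $\operatorname{Tr}(u/a^2)=0$; similarly $v\in O_b^q\iff\operatorname{Tr}(b/v^2)=0$. Hence $(U_a^q)^\times$ is the set of nonzero vectors of the $\F_2$-hyperplane $\ker\operatorname{Tr}(a^{-2}\,\cdot)\subset\F_q$, and as $a$ runs over $\F_q^\times$ these are exactly the $q-1$ distinct hyperplanes through the origin (using that $a\mapsto a^{-2}$ is a bijection of $\F_q^\times$ and that $c\mapsto\operatorname{Tr}(c\,\cdot)$ identifies $\F_q$ with its dual). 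Counting nonzero points and pairs of nonzero points in $\F_2$-hyperplanes yields the parameters $2$-$(q-1,\frac q2-1,\frac q4-1)$, the $2$-design condition being just that two independent functionals on $\F_q$ meet in a subspace of size $q/4$; this design is symmetric ($v=b=q-1$). Finally, the two criteria show that under the identifications $(U_a^q)^\times\leftrightarrow a$ and $(O_u^q)^\times\leftrightarrow u$ the point $a$ lies on the block $(O_u^q)^\times$ of $\cD_q^o$ iff $\operatorname{Tr}(u/a^2)=0$ iff $u$ lies on the block $(U_a^q)^\times$ of $\cD_q^u$, so $\cD_q^o$ is the dual of $\cD_q^u$ (hence also a design with the same parameters), and transposing incidence gives $\Aut(\cD_q^u)\cong\Aut(\cD_q^o)$.

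For (ii), the previous paragraph identifies $\cD_q^u$ with the design of points and hyperplanes of $\mathrm{PG}(n-1,2)$, with $\F_q$ read as $\F_2^n$. Every $g\in\GL_{\F_2}(\F_q)$ fixes $0$, permutes $\F_q^\times$, and preserves hyperplanes, so $\GL_{\F_2}(\F_q)\le\Aut(\cD_q^u)$. Conversely, given $\sigma\in\Aut(\cD_q^u)$, set $\sigma(0)=0$; it suffices to show $\sigma$ is additive, for then $\sigma\in\GL_{\F_2}(\F_q)\cong\GL_n(2)$. For $n\ge3$ this is the fundamental theorem of projective geometry, because collinearity is recovered from the design: for distinct nonzero $x,y,z$ one has $z=x+y$ iff every block containing $x$ and $y$ contains $z$ (forward, since hyperplanes are linear; reverse, since if $z\notin\langle x,y\rangle$ there is a hyperplane through $\langle x,y\rangle$ missing $z$), which forces $\sigma(x+y)=\sigma(x)+\sigma(y)$ for independent pairs, the remaining cases being immediate in characteristic $2$. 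For $n=2$ every block of $\cD_4^u$ is a singleton, so $\Aut(\cD_4^u)={\sf Sym}(\F_4^\times)=\GL_2(2)$ directly.

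For (iii), put $H=\Aut(\cD_q^u)=\GL_n(2)$ in its $2$-transitive (hence primitive) action on the $q-1$ points of $\mathrm{PG}(n-1,2)$, the dual $2$-transitive action on hyperplanes realizing $H\cong\Aut(\cD_q^o)$. For $n\ge3$, $\GL_n(2)=\PSL_n(2)$ is simple, so it has no subgroup of index $2$ and lies in ${\sf Alt}(\F_q^\times)$. A point stabilizer in $H$ is the stabilizer of a nonzero vector, the parabolic $P_1\cong 2^{n-1}{:}\GL_{n-1}(2)$; a block stabilizer is the setwise stabilizer of a hyperplane, the parabolic $P_{n-1}\cong 2^{n-1}{:}\GL_{n-1}(2)$ (the two roles interchange for $\cD_q^o$), and both are abstractly $2^{n-1}{:}\GL_{n-1}(2)$. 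To settle conjugacy, compare orbits on $\F_q^\times$: by $2$-transitivity $P_1$ has orbit lengths $1$ and $q-2$, while $P_{n-1}$ fixes its hyperplane setwise and so has orbit lengths $\frac q2-1$ and $\frac q2$; these multisets agree exactly when $q=4$. Hence for $n\ge3$ the two stabilizers are not conjugate even in ${\sf Sym}(\F_q^\times)$, while for $n=2$ there is a single proper parabolic of $\GL_2(2)$, the two stabilizers coincide, and they are conjugate in $H$.

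It remains to prove maximality, which I expect to be the main obstacle. Suppose $H<K\le{\sf Sym}(\F_q^\times)$; then $K$ is primitive, hence $2$-transitive, of degree $2^n-1$. Since $2^n-1$ is odd and (for $n\ge2$) not a proper prime power, and $|K|\ge|\GL_n(2)|$, the O'Nan--Scott theorem together with an elementary order estimate rules out the affine, product, diagonal and twisted-wreath types, so $K$ is almost simple with simple socle $T$ that is $2$-transitive of degree $2^n-1$. From the classification of $2$-transitive groups, the only such $T$ with $|T|\ge|\GL_n(2)|$ are $\PSL_n(2)$ and ${\sf Alt}(\F_q^\times)$: the degree-$(2^n-1)$ coincidences $\PSL_3(2)\cong\PSL_2(7)$ at $7$, $\GL_4(2)\cong{\sf Alt}(8)>{\sf Alt}(7)$ at $15$, $\GL_2(2)\cong{\sf Sym}(3)$ at $3$, and the competing $\PSL_d(q)$ of the same degree (e.g.\ $\PSL_3(5)$ at $n=5$) are all eliminated by order or by direct inspection, and when $T=\PSL_n(2)$ the graph automorphism does not stabilize the set of projective points, forcing $K=H$. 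Thus $H$ is maximal in ${\sf Alt}(\F_q^\times)$, and the same argument applied to the dual action gives the statement for $\Aut(\cD_q^o)$. Making this last step fully rigorous is the hard part: it requires the Liebeck--Praeger--Saxl description of the maximal subgroups of the alternating and symmetric groups (hence CFSG), together with the order comparisons and the low-dimensional coincidence checks; everything else reduces to bookkeeping with the trace criterion and with parabolic subgroups of $\GL_n(2)$.
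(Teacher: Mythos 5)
Your proposal is correct and follows essentially the same route as the paper: the blocks are identified with the $\F_2$-hyperplanes of $\F_q$ (you via the trace form, the paper via the image of the additive map $x\mapsto x^2+ax$), the duality comes from the same splitting equivalence $v\in O_a^q\Leftrightarrow a\in U_v^q$, additivity of automorphisms is recovered from the intersection of all blocks through two points, the stabilizers are the two parabolics $2^{n-1}{:}\GL_{n-1}(2)$ distinguished by their orbit lengths on points, membership in the alternating group follows from simplicity, and maximality ultimately rests on Liebeck--Praeger--Saxl, exactly as in the paper. Only cosmetic caveats: ``primitive, hence $2$-transitive'' should instead be justified by $K$ containing the $2$-transitive group $H$, and your hands-on classification of $2$-transitive overgroups is superfluous once the LPS theorem is invoked, which is all the paper does.
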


Notice that for each $a\in\F_q^\times$, the set $U_a$ is a hyperplane in the $\F_2$-vector space $\F_q$, the design $\cD_q^u$ is 
 a classical design, known as the projective space ${\sf PG}(n{-}1,2)$. However, $\cD_q^o$ is not a classical design as $O_a^q$ is not closed under addition for every $a\in\F_q^\times$.

The proof of Theorem \ref{thm:main1} follows by a series of lemmas and theorems in Section~\ref{sec:designs} and Section \ref{sec:automorphism}.
The paper is organized as follows. In Section~\ref{sec:designs}, we introduce the combinatorial designs constructed from algebraic curves 
(parabolas and hyperbolas) over the finite field $\F_q$ where $q := 2^n$, 
provide their parameters, and show that these designs are isomorphic to the dual to each other. In Section~\ref{sec:automorphism}, we determine the structure and the primitive action of the full automorphism groups $\Aut(\cD_q^u)$  and $\Aut(\cD_q^o)$ on points and blocks,  prove the non-conjugation of the stabilizers, and then prove their maximality in the alternating group ${\sf Alt}(\F_q^\times)$ as mentioned in Theorem \ref{thm:main1} (iii). Finally, we put forward
 two open questions 
for these automorphism groups in Question~\ref{que:ONanScott}.

\section{\bf Terminology}\label{sec-tan}
The notation for designs is as in \cite{ak:book}. An incidence structure $\cD:=(\cP,\cB,\cI)$, with point set $\cP,$ block set $\cB$ and incidence relation $\cI$ is a {\em $t$-$(v,k,\lambda)$ design}, if $|\cP|=v$, every block $B \in \cB$ is incident with precisely $k$ points, and every $t$ distinct points are together incident with precisely $\lambda$ blocks. The {\em dual} of $\cD$ is a design $\cD^* := (\cB, \cP)$, where $\cB$ corresponds to a set of points and $\cP$ to a set of blocks, such that $B \in \cB$ is a point incident with a block $p \in \cP$ if and only if $p$
is incident with $B$ in $\cD$.
The {\em complementary design} is obtained by replacing all blocks by their complements. The design $\cD$ is {\em symmetric} if it has the same number of points and blocks.

For designs $\cD:=(\cP,\cB)$ and $\cD':=(\cP',\cB')$, we say $\cD$ and $\cD'$ are {\em (design) isomorphic} if there is a bijective map $\sigma$ from $\cP$ onto $\cP'$ such that the image $\sigma(B)$ of every block $B\in\cB$ is a block in $\cB'$, and $\sigma$ is called a {\em (design) isomorphism}.
If $\cD=\cD'$, then $\sigma$ is called an {\em automorphism} of $\cD$.
The set of all automorphisms of $\cD$ forms a group under composition, called the {\em (full) automorphism group} of $\cD$, denoted by \Aut($\cD$).

Our notation for groups will be standard, and it is as in \cite{Asc86}.

\medskip

We start by recalling the following property of a finite dimensional $\F_q$-vector space $V$.

\begin{lemma}
	\label{lem:Hyp}
	Let $V$ be an $n$-dimensional vector space over 
	 $\F_q$. Then $V$ has exactly $\frac{q^n-1}{q-1}$ hyperplanes, i.e. $V$ has exactly $\frac{q^n-1}{q-1}$ vector subspaces of dimension $n{-}1$.
\end{lemma}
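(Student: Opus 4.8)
The plan is to count hyperplanes of an $n$-dimensional vector space $V$ over $\F_q$ by a standard double-counting (or orbit-stabilizer) argument using the action of $\GL(V)$, or more elementarily by counting surjective linear functionals. I would phrase it as follows. First, recall that a hyperplane is precisely the kernel of a nonzero linear functional $\phi\colon V \to \F_q$, since any subspace of dimension $n-1$ arises this way and conversely every nonzero functional has a kernel of dimension $n-1$ by the rank-nullity theorem. The dual space $V^* = \mathrm{Hom}_{\F_q}(V,\F_q)$ has dimension $n$, hence $|V^*| = q^n$, so there are $q^n - 1$ nonzero functionals.

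Next I would observe that two nonzero functionals $\phi$ and $\psi$ have the same kernel if and only if $\psi = c\,\phi$ for some $c \in \F_q^\times$. The ``if'' direction is immediate; for the ``only if'' direction, if $\ker\phi = \ker\psi = W$, pick $v \notin W$, so $V = W \oplus \F_q v$; then for a suitable scalar $c = \psi(v)/\phi(v) \in \F_q^\times$ the functional $\psi - c\phi$ vanishes on $W$ and on $v$, hence is identically zero. Therefore the map sending a nonzero functional to its kernel partitions the $q^n - 1$ nonzero functionals into fibers each of size $q - 1$, and the number of hyperplanes is $\frac{q^n - 1}{q - 1}$.

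This argument has no real obstacle; it is entirely routine. The only point requiring a little care is the claim that every $(n-1)$-dimensional subspace $W$ is the kernel of some nonzero functional, which follows by extending a basis of $W$ to a basis of $V$ and letting $\phi$ be the coordinate functional dual to the added basis vector. An alternative, equally short route would be to let $\GL(V)$ act transitively on the set of hyperplanes and compute the index of the stabilizer of a fixed hyperplane, but the dual-space count above is cleaner and self-contained, so that is the route I would present.
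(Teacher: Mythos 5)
Your argument is correct and complete: hyperplanes correspond bijectively to nonzero functionals modulo scaling by $\F_q^\times$, giving $\frac{q^n-1}{q-1}$, and you correctly justify both that every $(n-1)$-dimensional subspace is a kernel and that two nonzero functionals share a kernel exactly when they are proportional. The paper itself supplies no proof of this lemma --- it is merely ``recalled'' as a standard fact --- so your dual-space count is a clean, self-contained way to fill it in; the alternative orbit--stabilizer route you mention would work equally well but requires knowing $|\GL_n(q)|$ and the order of the stabilizer of a hyperplane, which is more machinery than the statement needs.
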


\section{\bf Designs constructed from algebraic curves}
\label{sec:designs}

First we define blocks.

\begin{definition} (Hyperbolas)
\label{defn:hyperb}
	For every $a\in\F_q^\times$, we define $$O_a^q:=\{x+ax^{-1}:x\in\F_q^\times\}=\{b\in \F_q:x^2+bx+a\text{ splits over }\F_q\}, \text{ and}$$
 $$\cB_q^o:=\{(O_a^q)^\times:a\in\F_q^\times\}\text{ and }\cD_q^o:=(\F_q^\times,\cB_q^o).$$
\end{definition}

 We have $0\in O_a^q$ as there is $u\in\F_q^\times$ such that $u^2=a\Leftrightarrow u+au^{-1}=0$, and $|O_a^q|=\frac{q}{2}$ as $x+ax^{-1}=y+ay^{-1}$ if and only if $(x+y)(a+xy)=0$,  if and only if $\{x,ax^{-1}\}=\{y,ay^{-1}\}$. Thus, $|(O_a^q)^\times|=\frac{q}{2}-1$.

Consider the regular action of $\F_q^\times$ on itself, i.e. the (left) multiplication of $\F_q^\times$ on $\cP$. Thus, $\F_q^\times$ acts transitively on $\cP$. For every $t\in\F_q^\times$, we denote $\nu_t:\F_q\rightarrow\F_q,u\mapsto tu$. We have $$\nu_t(O_a^q)=tO_a^q=\{tx+tax^{-1}:x\in\F_q^\times\}=\{y+t^2ay^{-1}:y\in\F_q^\times\}=O_{t^2a}^q.$$
This implies that the map $\nu_t:\cB_q^o\rightarrow\cB_q^o,(O_a^q)^\times\mapsto (O_{t^2a}^q)^\times$ is well-defined. As the square map is a field automorphism of $\F_q$, this multiplication action of $\F_q^\times$ is transitive on $\cB_q^o$.

\begin{lemma}
	\label{lem:reg_act}
	$\F_q^\times$ acts regularly on $\cB_q^o$. Thus, $|\cB_q^o|=q-1$.
\end{lemma}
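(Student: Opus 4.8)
The plan is to use the fact, already established in the paragraph preceding the statement, that the multiplication action of $\F_q^\times$ on $\cB_q^o$ is transitive; it then suffices to prove that the action is semiregular, i.e.\ that each block stabilizer is trivial, since transitive $+$ semiregular $=$ regular, and regularity gives $|\cB_q^o|=|\F_q^\times|=q-1$ by orbit--stabilizer. Because $\nu_t$ sends $(O_a^q)^\times$ to $(O_{t^2a}^q)^\times$ and $0\in O_c^q$ for every $c$ (so passing between $O_c^q$ and $(O_c^q)^\times$ loses no information), the block $(O_a^q)^\times$ is $\nu_t$-stable precisely when $O_{t^2a}^q=O_a^q$. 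Hence the whole statement reduces to the claim that $a\mapsto O_a^q$ is injective on $\F_q^\times$: granting it, $\nu_t$-stability forces $t^2a=a$, so $t^2=1$, so $t=1$ since squaring is a bijection of $\F_q$ in characteristic $2$.

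To establish injectivity I would first re-express $O_a^q$ via the absolute trace $\mathrm{Tr}\colon\F_q\to\F_2$, starting from the description $O_a^q=\{b\in\F_q: x^2+bx+a\text{ splits over }\F_q\}$ of Definition~\ref{defn:hyperb}. For $b=0$ the polynomial $x^2+a$ always splits, so $0\in O_a^q$; for $b\in\F_q^\times$ the substitution $x=bz$ rewrites $x^2+bx+a=0$ as the Artin--Schreier equation $z^2+z=a/b^2$, which has a solution in $\F_q$ if and only if $\mathrm{Tr}(a/b^2)=0$. This yields
\[
O_a^q=\{0\}\cup\{\,b\in\F_q^\times:\mathrm{Tr}(a/b^2)=0\,\}.
\]

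For the injectivity step, suppose $O_a^q=O_b^q$ with $a,b\in\F_q^\times$. Then $\mathrm{Tr}(a/c^2)=0\iff\mathrm{Tr}(b/c^2)=0$ for every $c\in\F_q^\times$; since $c\mapsto c^{-2}$ is a permutation of $\F_q^\times$, this says $\mathrm{Tr}(ad)=0\iff\mathrm{Tr}(bd)=0$ for all $d\in\F_q^\times$ (and trivially for $d=0$), i.e.\ the $\F_2$-hyperplanes $\ker(d\mapsto\mathrm{Tr}(ad))$ and $\ker(d\mapsto\mathrm{Tr}(bd))$ coincide. Over $\F_2$ the only nonzero scalar is $1$, so equal hyperplanes give equal functionals, whence $\mathrm{Tr}((a+b)d)=0$ for all $d$ and nondegeneracy of the trace form forces $a+b=0$, i.e.\ $a=b$. (Alternatively, invoke Lemma~\ref{lem:Hyp}: $\F_q$ has exactly $q-1$ $\F_2$-hyperplanes and there are exactly $q-1$ functionals $d\mapsto\mathrm{Tr}(ad)$ with $a\in\F_q^\times$, so $a\mapsto\ker(d\mapsto\mathrm{Tr}(ad))$ is a bijection.) This proves $a\mapsto O_a^q$ injective, hence the block stabilizers are trivial, the action is regular, and $|\cB_q^o|=q-1$.

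I expect the only genuine content — the rest being bookkeeping — to be the Artin--Schreier input identifying solvability of $z^2+z=a/b^2$ with the vanishing of $\mathrm{Tr}(a/b^2)$. The one point that needs a little care is the reparametrization $c\mapsto c^{-2}$ used to convert the equality $O_a^q=O_b^q$ into an equality of trace hyperplanes (note that $(O_a^q)^\times$ itself is not a hyperplane, but its image under this fixed bijection is); everything else, such as the fact that in a transitive action one trivial stabilizer forces all stabilizers trivial, is standard.
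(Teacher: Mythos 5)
Your proof is correct, but it takes a genuinely different route from the paper's. The paper keeps the argument purely group-theoretic: assuming a nontrivial block stabilizer of order $r$, it notes that $r$ divides $q-1$ (orbit--stabilizer on $\cB_q^o$) and also divides $|(O_a^q)^\times|=\frac{q}{2}-1$ (because the stabilizer acts semiregularly by multiplication, so the fixed block is a disjoint union of orbits of size $r$), and since $2(\frac{q}{2}-1)=q-2$ is coprime to $q-1$ this forces $r=1$. You instead prove that $a\mapsto O_a^q$ is injective, using the Artin--Schreier criterion to rewrite $O_a^q=\{0\}\cup\{b\in\F_q^\times:\mathrm{Tr}(a/b^2)=0\}$ and then identifying trace hyperplanes via nondegeneracy of the trace form; triviality of stabilizers then follows from $O_{t^2a}^q=O_a^q\Rightarrow t^2a=a\Rightarrow t=1$. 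Both arguments are sound: the paper's is more elementary (no trace or Artin--Schreier input, just the coprimality of $q-1$ and $q-2$), while yours yields strictly more information --- an explicit trace-form description of the hyperbola blocks and the injectivity of $a\mapsto O_a^q$, a fact the paper only extracts afterwards as a consequence of regularity (in the proof of Theorem \ref{thm:1design}(i)) and which is closely related to the duality $(*)$ with the hyperplanes $U_v^q$. Your reduction steps (transitive plus trivial stabilizer gives regular; passing between $O_a^q$ and $(O_a^q)^\times$ since $0$ lies in every $O_a^q$; $t^2=1\Rightarrow t=1$ in characteristic $2$) are all handled correctly; the only external ingredient is the standard fact that $z^2+z=c$ is solvable in $\F_q$ iff $\mathrm{Tr}(c)=0$, which is legitimate to cite.
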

\begin{proof}
	Since $\F_q^\times$ acts transitively on $\cB_q^o$ via $tO_a^q=O_{t^2a}^q$ for all $t\in\F_q^\times$, it suffices to show that the stabilizer $\Stab_{\F_q^\times}(O_a^q)=1$ for some $a\in\F_q^\times$. 	
	Since $(\F_q^\times,\cdot)$ is cyclic, let $x\in\F_q^\times$ such that $\F_q^\times=\la x\ra=\{1,x,\ldots,x^{q-2}\}$.
	
	For a contradiction, we assume that $\Stab_{\F_q^\times}(O_a^q)=\la s\ra\neq 1$ where $s=x^m$ with the smallest positive $m\leq q{-}2$. By the transitivity of $\F_q^\times$ on $\cB_q^o$, we have $|\cB_q^o|=(q-1)/r$ where $r:=|\Stab_{\F_q^\times}((O_a^q)^\times)|$. Thus, 	$mr=q{-}1$ and $r\,|\,(q{-}1)$.
	
	Write $(O_a^q)^\times=\{a_1,a_2,\ldots,a_k\}$ where $k=\frac{q}{2}-1$.	Since $s(O_a^q)^\times=\{sa_1,\ldots,sa_k\}=(O_a^q)^\times$, the stabilizer $\la s\ra$ acts on $(O_a^q)^\times$. Here, the orbit of $a_i$ is $a_i\la s\ra$ of size $r$. As $(O_a^q)^\times$ is a union of disjoint orbits whose size is $r$, we have $r\,|\,(\frac{q}{2}-1)$.
	
	Notice that $2(\frac{q}{2}-1)=q{-}2$ and $\gcd(q{-}2,q{-}1)=1$. Since $r\,|\,(q{-}1)$ and $r\,|\,(\frac{q}{2}-1)$, this forces $r=1$. This contradicts the fact that the order of $\Stab_{\F_q^\times}((O_a^q)^\times)$ is nontrivial.
	
	 Therefore, $\Stab_{\F_q^\times}((O_a^q)^\times)=1$	 and $\F_q^\times$ acts regularly on $\cB_q^o$.	
\end{proof}


\smallskip

\begin{definition} \label{defn:parab} (Parabolas)
	For each $a\in\F_q^\times$, we define the map $\lambda_a:\F_q\rightarrow\F_q,x\mapsto x^2+ax$. Denote 
	$\im(\lambda_a),$ the image of $\lambda_a$.
	For each $a\in\F_q^\times$, we define
	\begin{center}
		$U_a^q:=\im(\lambda_a)$, 
	 $\cB_q^u:=\{(U_a^q)^\times:a\in\F_q^\times\}$, and $\cD_q^u:=(\F_q^\times,\cB_q^u)$.
	 \end{center}
\end{definition}

 Since ${\sf char}(\F_q)=2$, the map $\lambda_a$ is an additive group homomorphism whose kernel is $\{0,a\}$. Thus, the image $\im(\lambda_a)$ of $\lambda_a$ is an $\F_2$-hyperplane of $\F_q$ of size $\frac{q}{2}$. Further, we have $$U_a^q=\{x^2+ax:x\in\F_q\}=\{b\in\F_q:x^2+ax+b\textit{ splits over }\F_q\,\}.$$

For each $t,a\in\F_q^\times$, we have $$\nu_{t^2}(U_a^q)=t^2U_a^q=\{t^2x^2+t^2ax:x\in\F_q\}=\{y^2+tay:y\in\F_q\}=U_{ta}^q.$$
This implies that the map $\nu_{t^2}:\cB_q^u\rightarrow\cB_q^u,(U_a^q)^\times\mapsto(U_{ta}^q)^\times$ is well-defined. With the square field automorphism, the multiplication action of $\F_q^\times$ is transitive on $\cB_q^u$. With a similar argument in the proof of Lemma \ref{lem:reg_act}, we can show that the multiplicative group $\F_q^\times$ acts regularly on $\cB_q^u$, and $|\cB_q^u|=q-1$.

\begin{remark}\label{rem:Hyper}
	{\rm $(i)$ Note that there is $a\in\F_q^\times$ such that $\ker(\lambda_a)=\{0,a\}\subseteq \im(\lambda_a)$. For example, with $q=2^{2r}$, the polynomial $x^2+x+1$ splits over $\F_q$, and thus $1\in \im(\lambda_1)\cap \ker(\lambda_1)$.
	
	$(ii)$ By Lemma \ref{lem:Hyp}, the set $\{U_a^q:a\in\F_q^\times\}$ is the set of all $\F_2$-hyperplanes of $\F_q$, i.e. for each $\F_2$-hyperplane $H$ of $\F_q$, there is a unique $a\in\F_q^\times$ such that $H=U_a^q$.}
\end{remark}

\smallskip

For every $a,v\in\F_q^\times$, we have $$v\in(O_a^q)^\times\Leftrightarrow \exists x\in\F_q^\times,\,v=x+ax^{-1}\Leftrightarrow \exists x\in\F_q^\times,\,x^2+vx+a=0\Leftrightarrow a\in (U_v^q)^\times.\ \ \ (*)$$

Now we will show that both geometric structures $\cD_q^o=(\F_q^\times, \cB_q^o)$ and $\cD_q^u=(\F_q^\times,\cB_q^u)$ are $2$-designs. Furthermore, one is design-isomorphic to the dual of the other.

\begin{theorem}
	\label{thm:1design}
	The following results hold.
	\begin{itemize}
		\item[(i)] For every $v\in\F_q^\times$, there are exactly $\frac{q}{2}-1$ elements $a\in \F_q^\times$ such that $v\in O_a^q$.\\ Thus, $\cD_q^o=(\F_q^\times,\cB_q^o)$ is a $1$-$(q{-}1,\frac{q}{2}-1,\frac{q}{2}-1)$ design.
		\item[(ii)] For every $a\in\F_q^\times$, there are exactly $\frac{q}{2}-1$ elements $v\in \F_q^\times$ such that $a\in U_v^q$.\\ Thus, $\cD_q^u=(\F_q^\times,\cB_q^u)$ is a $1$-$(q{-}1,\frac{q}{2}-1,\frac{q}{2}-1)$ design.
		\item[(iii)] 	The design $\cD_q^o=(\F_q^\times,\cB_q^o)$ is isomorphic to the dual of the design $\cD_q^u=(\F_q^\times,\cB_q^u)$.
		\item[(iv)] For all $v\neq u\in \F_q^\times$, there are exactly  $\frac{q}{4}-1$ elements $a\in\F_q^\times$ such that $u,v\in O_a^q$.\\ Thus, $\cD_q^o=(\F_q^\times,\cB_q^o)$ is a $2$-$(q{-}1,\frac{q}{2}-1,\frac{q}{4}-1)$ design.
	\end{itemize}
	
\end{theorem}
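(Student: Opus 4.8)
The plan is to read off all four parts from the chain of equivalences $(*)$, combined with the description of the $\F_2$-hyperplanes of $\F_q$ in Lemma~\ref{lem:Hyp} and Remark~\ref{rem:Hyper}. For part (i), I would fix $v\in\F_q^\times$; since $v\neq0$, the condition $v\in O_a^q$ is the same as $v\in(O_a^q)^\times$, which by $(*)$ is equivalent to $a\in(U_v^q)^\times$. Now $U_v^q=\im(\lambda_v)$ is an $\F_2$-hyperplane of $\F_q$ and contains $0=\lambda_v(0)$, so $|(U_v^q)^\times|=\tfrac q2-1$; hence exactly $\tfrac q2-1$ values of $a$ satisfy $v\in O_a^q$, and this count is independent of $v$, giving part (i). Part (ii) is the mirror statement: by $(*)$, $a\in(U_v^q)^\times$ (equivalently $a\in U_v^q$, as $a\neq0$) holds iff $v\in(O_a^q)^\times$, and the number of such $v$ is $|(O_a^q)^\times|=\tfrac q2-1$, which was computed right after Definition~\ref{defn:hyperb}.

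For part (iii), I would use the obvious candidate map $\sigma\colon\F_q^\times\to\cB_q^u$ defined by $\sigma(v)=(U_v^q)^\times$. By Remark~\ref{rem:Hyper}(ii), $v\mapsto U_v^q$ is a bijection from $\F_q^\times$ onto the set of all $\F_2$-hyperplanes of $\F_q$; since every such hyperplane contains $0$, deleting $0$ preserves injectivity, so $\sigma$ is a bijection onto the point set $\cB_q^u$ of the dual $(\cD_q^u)^*$. The block of $(\cD_q^u)^*$ labelled by $a\in\F_q^\times$ is $\{(U_v^q)^\times : a\in(U_v^q)^\times\}$, whereas $\sigma$ carries the block $(O_a^q)^\times$ of $\cD_q^o$ to $\{(U_v^q)^\times : v\in(O_a^q)^\times\}$; by $(*)$ the two conditions on $v$ coincide, so $\sigma$ maps each block of $\cD_q^o$ onto a block of $(\cD_q^u)^*$ and is therefore a design isomorphism.

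For part (iv), given distinct $u,v\in\F_q^\times$, the translation used in (i) shows $u,v\in O_a^q$ iff $a\in(U_u^q)^\times\cap(U_v^q)^\times=(U_u^q\cap U_v^q)\setminus\{0\}$. Since $u\neq v$, Remark~\ref{rem:Hyper}(ii) gives $U_u^q\neq U_v^q$, so these are two distinct $\F_2$-hyperplanes of the $n$-dimensional space $\F_q$; then $U_u^q+U_v^q=\F_q$, whence $\dim_{\F_2}(U_u^q\cap U_v^q)=n-2$ and $|U_u^q\cap U_v^q|=\tfrac q4$. As $0$ lies in the intersection, exactly $\tfrac q4-1$ values of $a$ satisfy $u,v\in O_a^q$; together with part (i) this shows $\cD_q^o$ is a $2$-$(q-1,\tfrac q2-1,\tfrac q4-1)$ design.

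I do not anticipate a serious difficulty: each part collapses, via $(*)$, to a fact about hyperplanes of $\F_q$ that is either already recorded or is routine linear algebra. The only point needing care is part (iii), where one must match the incidence of $\cD_q^o$ against that of the \emph{dual} of $\cD_q^u$ rather than $\cD_q^u$ itself; but this matching is exactly what the biconditional $(*)$ provides, and the bijection $\sigma$ is essentially forced by it.
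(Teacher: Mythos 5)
Your proposal is correct and follows essentially the same route as the paper: every part is reduced via the equivalence $(*)$ to counting nonzero points of $\F_2$-hyperplanes of $\F_q$, and your part (iii) is the paper's isomorphism read in the dual direction (you send $\cD_q^o$ to $(\cD_q^u)^*$ via $v\mapsto (U_v^q)^\times$, while the paper sends $(\cD_q^o)^*$ to $\cD_q^u$ via $\gamma:(O_a^q)^\times\mapsto a$). The one detail the paper makes explicit and you leave tacit is that distinct $a$ give distinct blocks $(O_a^q)^\times$ (Lemma~\ref{lem:reg_act}), which is what lets the count of parameters $a$ in (i) and (iv) be read as a count of blocks through the given point(s); this is easily supplied, so it is a minor omission rather than a gap.
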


\begin{proof}
	(i) For each $v\in\F_q^\times$, there are exactly $|(U_v^q)^\times|=\frac{q}{2}-1$ elements $a\in\F_q^\times$ such that $v\in (O_a^q)^\times$.	
	By Lemma \ref{lem:reg_act}, we have $|\cB|=q{-}1$, and $(O_a^q)^\times\neq (O_b^q)^\times$ for all $a\neq b\in\F_q^\times$.
	
	This confirms that each point $v\in\F_q^\times$ is incident with exactly $\frac{q}{2}-1$ blocks 
	 $(O_a^q)^\times$ where $a\in(U_v^q)^\times$. Therefore, $\cD_q^o$ is a $1$-$(q{-}1,\frac{q}{2}-1,\frac{q}{2}-1)$ design.
	
	\smallskip
	
	(ii)  By $(*)$, for each $a\in\F_q^\times$, there are exactly $|(O_a^q)^\times|=\frac{q}{2}-1$ elements $v\in\F_q^\times$ such that $a\in (U_v^q)^\times$. Arguing similarly as in part (i) and applying Lemma \ref{lem:reg_act}, it follows that $\cD_q^u$ is a $1$-$(q{-}1,\frac{q}{2}-1,\frac{q}{2}-1)$ design.
	
	\smallskip
	
	(iii) By definition, the dual of $\cD_q^o$ is $(\cB_q^o,\F_q^\times)$ where a block $v\in\F_q^\times$ is incident with all points $(O_a^q)^\times\in\cB_q^u$ satisfying $v\in (O_a^q)^\times$. In this dual, by (*), a point $(O_a^q)^\times$ is incident with a block $v$ if and only if $a\in (U_v^q)^\times$.
	
	We define the bijection  $\gamma:\cB_q^o\rightarrow\F_q^\times,(O_a^q)^\times\mapsto a$. 
	The map $\gamma$ is well-defined by the regular action of $\F_q^\times$ on $\cB_q^o$.
	By $(*)$, the incidence relations of designs $(\cB_q^u,\F_q^\times)$  and $(\F_q^\times,\cB_q^u)$ induce the bijective map $\bar\gamma:\F_q^\times\rightarrow\cB_q^u,v\mapsto (U_v^q)^\times$, i.e. the image of every bock in $\cD_q^o$ is a block in $\cD_q^u$. Therefore, the bijection $\gamma:(\cB_q^o,\F_q^\times)\rightarrow(\F_q^\times,\cB_q^u)$ is a design isomorphism.

	\smallskip

	(iv) For $x\neq y\in\F_q^\times$, a direct application of $(*)$ shows that $x,y\in O_a^q$ for some $a\in\F_q^\times$ if and only if $a\in (U_x^q)^\times\cap (U_y^q)^\times$.
	
	By Remark \ref{rem:Hyper} $(ii)$,
	if $x\neq y\in\F_q^\times$, then $U_x^q\neq U_y^q$. Since both $U_x^q$ and $U_y^q$ are distinct $\F_2$-hyperplanes of $\F_q$, from $|\F_q|=|U_x^q+U_y^q|=|U_x^q||U_y^q|/|U_x^q\cap U_y^q|$, we have  $|U_x^q\cap U_y^q|=q/4$.
	Thus, $|(U_x^q)^\times\cap (U_y^q)^\times|=\frac{q}{4}-1$, from which we deduce that there are exactly $(\frac{q}{4}-1)$ elements $a\in\F_q^\times$ such that $x,y\in (O_a^q)^\times$. This shows that $\cD_q^o$ is a $2$-$(q{-}1,\frac{q}{2}{-}1,\frac{q}{4}{-}1)$ design.	
\end{proof}

\vspace*{1mm}

\begin{remark}
	{\rm Fix $a\in\F_q^\times$ and write $(O_a^q)^\times=\{a_1,\ldots,a_k\}$ where $k=\frac{q}{2}-1$. For every $u\in\F_q^\times$, we have $u\in ua_i^{-1}O_a^q$ for all $1\leq i\leq k$. 	By Lemma \ref{lem:reg_act}, all these blocks are distinct since  $\F_q^\times$ acts regularly on $\cB_q^o$. 
	This offers an alternative proof of part (i) of Theorem \ref{thm:1design}, and an equivalent argument for Theorem \ref{thm:1design} (ii).}
\end{remark}

\vspace*{1mm}

\begin{remark} \label{rem:T_q}
	{\rm Since the multiplicative group $T_q:=\{\nu_t:t\in\F_q^\times\}\cong(\F_q^\times,\cdot)$ acts transitively on $\F_q^\times$, $\cB_q^o$, and $\cB_q^u$ respectively, it is a transitive subgroup of 
	$\Aut(\cD_q^o)$ and $\Aut(\cD_q^u)$. As subgroups of the symmetric group ${\sf Sym}(\F_q^\times)$, we have $T_q\leq \Aut(\cD_q^o)\cap\Aut(\cD_q^u)$.}
\end{remark}


\vspace*{1mm}

Next, we examine the complementary designs of these designs to uncover their properties and those of the polynomials $x^2+ax+b$ in $\F_q[x]$.

\begin{definition}
	\label{defn:cdesign}
	For every $a\in\F_q^\times$, we define $$\ov{O_a^q}:=\F_q{-}O_a^q=\{b\in\F_q:t^2+bt+a\text{ is irreducible in }\F_q[t]\}.$$
	We define $$\cB_q^{oc}:=\{\ov{O_a^q}:a\in\F_q^\times\}, \text{ and }\cD_q^{oc}:=(\F_q^\times,\cB_q^{oc}) .$$
\end{definition}

\noindent
Here, $\cD_q^{oc}$  
denotes the complementary design of $\cD_q^o$.

\begin{lemma}
	\label{lem:cdesign}
	For all $a\neq b\in\F_q^\times$, $|\ov{O_a^q}\cap\ov{O_b^q}|=\frac{q}{4}$, and $\cD_q^{oc}$ is a $2$-$(q{-}1,\frac{q}{2},\frac{q}{4})$ design.
\end{lemma}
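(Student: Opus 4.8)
The plan is to deduce everything from Theorem~\ref{thm:1design} by a straightforward inclusion–exclusion count, since the complementary design $\cD_q^{oc}$ is obtained from $\cD_q^o$ by block complementation inside the point set $\F_q^\times$ of size $q-1$. First I would record the block size: $|\ov{O_a^q}|=|\F_q^\times|-|(O_a^q)^\times|=(q-1)-(\tfrac{q}{2}-1)=\tfrac{q}{2}$, which gives $k=\tfrac{q}{2}$ for the complementary design. Then, fixing $a\neq b\in\F_q^\times$, I would compute $|\ov{O_a^q}\cap\ov{O_b^q}|$ using the identity
\[
|\ov{O_a^q}\cap\ov{O_b^q}|=|\F_q^\times|-|(O_a^q)^\times|-|(O_b^q)^\times|+|(O_a^q)^\times\cap(O_b^q)^\times|.
\]
By Theorem~\ref{thm:1design}(iv), $|(O_a^q)^\times\cap(O_b^q)^\times|=\tfrac{q}{4}-1$ (this is exactly the statement that there are $\tfrac{q}{4}-1$ points lying in both blocks). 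Plugging in, $|\ov{O_a^q}\cap\ov{O_b^q}|=(q-1)-2(\tfrac{q}{2}-1)+(\tfrac{q}{4}-1)=\tfrac{q}{4}$, as claimed.

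Next I would verify that $\cD_q^{oc}$ is indeed a $2$-design with $\lambda=\tfrac{q}{4}$. For any two distinct points $u,v\in\F_q^\times$, the number of blocks $\ov{O_a^q}$ containing both equals the number of $a\in\F_q^\times$ with $u,v\notin O_a^q$; by another inclusion–exclusion over the index set of size $q-1$ (using that $\F_q^\times$ acts regularly on $\cB_q^o$, so blocks are indexed bijectively by $a\in\F_q^\times$), this count is
\[
(q-1)-r_u-r_v+\lambda_{uv},
\]
where $r_u=r_v=\tfrac{q}{2}-1$ is the replication number from Theorem~\ref{thm:1design}(i) and $\lambda_{uv}=\tfrac{q}{4}-1$ from part (iv). This again yields $\tfrac{q}{4}$, independent of the chosen pair, so the complementary structure is a $2$-$(q-1,\tfrac{q}{2},\tfrac{q}{4})$ design. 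I would also note that distinct $a$ give distinct complementary blocks, since $\ov{O_a^q}=\ov{O_b^q}$ forces $O_a^q=O_b^q$ and hence $a=b$ by Lemma~\ref{lem:reg_act}; so there are exactly $q-1$ blocks, matching the parameter count.

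There is essentially no obstacle here: the lemma is a formal consequence of the already-established parameters of $\cD_q^o$ together with the fact that complementation inside a fixed $v$-set turns a $2$-$(v,k,\lambda)$ design into a $2$-$(v,v-k,\lambda')$ design with $\lambda'=b-2r+\lambda$, where $b$ is the number of blocks and $r$ the replication number. The only point requiring a moment's care is bookkeeping with the "$\times$" (removal of $0$), i.e. keeping straight that $O_a^q\subseteq\F_q$ contains $0$ while the point set is $\F_q^\times$, so that complementation is taken relative to $\F_q^\times$ and not $\F_q$; once that is pinned down the arithmetic closes immediately. If desired I would phrase the whole argument via the standard complementation formula rather than re-deriving the inclusion–exclusion by hand.
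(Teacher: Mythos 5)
Your overall strategy --- inclusion--exclusion over the $q-1$ blocks using the parameters of $\cD_q^o$ already established in Theorem~\ref{thm:1design} --- is essentially the paper's own argument, and the second half of your proposal is correct and correctly sourced: the number of blocks $\ov{O_a^q}$ containing two fixed distinct points is $(q-1)-2(\frac{q}{2}-1)+(\frac{q}{4}-1)=\frac{q}{4}$, using part (i) for the replication number and part (iv) for $\lambda$, exactly as in the paper.

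The gap is in your first computation, where you assert $|(O_a^q)^\times\cap(O_b^q)^\times|=\frac{q}{4}-1$ and claim this ``is exactly the statement'' of Theorem~\ref{thm:1design}(iv). It is not: part (iv) says that two distinct \emph{points} lie on exactly $\frac{q}{4}-1$ common blocks, whereas here you need that two distinct \emph{blocks} share exactly $\frac{q}{4}-1$ points. These are dual assertions; they do coincide for this design, but that coincidence is a fact to be proved (for instance via the classical theorem that in a symmetric $2$-design any two distinct blocks meet in $\lambda$ points, which the paper never invokes), not a paraphrase of (iv). The quickest repair within the paper's framework is the equivalence $(*)$: for $v\in\F_q^\times$ one has $v\in O_a^q\cap O_b^q$ if and only if $a,b\in U_v^q$, so $|(O_a^q)^\times\cap(O_b^q)^\times|$ equals the number of $\F_2$-hyperplanes $U_v^q$ containing the $2$-dimensional subspace $\la a,b\ra$, which is $2^{n-2}-1=\frac{q}{4}-1$ by Remark~\ref{rem:Hyper}(ii) and Lemma~\ref{lem:Hyp} (the same count that drives the proof of Theorem~\ref{thm:1design}(iv)). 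With that inserted, your arithmetic $|\ov{O_a^q}\cap\ov{O_b^q}|=(q-1)-(\frac{3q}{4}-1)=\frac{q}{4}$ goes through and matches the paper, which uses the equivalent fact $|O_a^q\cap O_b^q|=\frac{q}{4}$ (including $0$). One small bookkeeping note: the paper defines $\ov{O_a^q}=\F_q-O_a^q$, and since $0\in O_a^q$ this already equals $\F_q^\times-(O_a^q)^\times$, so your insistence on complementing inside $\F_q^\times$ changes nothing.
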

\begin{proof}
	Since $|O_a^q|=\frac{q}{2}$, we have $|\ov{O_a^q}|=\frac{q}{2}$ for every $a\in\F_q^\times$.
	
	For every $x\in\F_q^\times$, there are exactly $(\frac{q}{2}{-}1)$ blocks $(O_a^q)^\times$ containing $x$. Thus, there are exactly $(\frac{q}{2}{-}1)$ blocks $\ov{O_a^q}$ not containing $x$. So there are exactly $\frac{q}{2}$ blocks $\ov{O_b^q}$ containing $x$. This shows that $\cD_q^{oc}=(\F_q^\times,\cB_q^{oc}) $ is a $1$-$(q{-}1,\frac{q}{2},\frac{q}{2})$ design.
	
	For $a\neq b\in\F_q^\times$, we have $\ov{O_a^q}\cap\ov{O_b^q}=\F_q{-}(O_a^q\cup O_b^q)$. Since $|O_a^q|=|O_b^q|=\frac{q}{2}$ and $|O_a^q\cap O_b^q|=\frac{q}{4}$ (including $0$), we have $|O_a^q\cup O_b^q|=\frac{3q}{4}$. Thus, $|\ov{O_a^q}\cap\ov{O_b^q}|=\frac{q}{4}$.

	For $a,x,y\in \F_q^\times$  where $x\neq y$, we have $x,y\in \ov{O_a^q}$  if and only if $x,y\not\in (O_a^q)^\times$. Using the same counting method, one can compute that there are exactly $(\frac{q}{2}{-}1)$ blocks $(O_a^q)^\times$ containing $x$, and exactly $(\frac{q}{2}{-}1)$ blocks $(O_b^q)^\times$ containing $y$, respectively. Furthermore, 
	 there are exactly $(\frac{q}{4}{-}1)$ blocks $(O_a^q)^\times$ containing both $x$ and $y$. Thus, there are exactly $\frac{3q}{4}{-}1$ blocks $(O_a^q)^\times$ containing either $x$ or $y$. So the number of blocks $(O_a^q)^\times$ not containing both $x$ and $y$ is $\frac{q}{4}$. This shows that $\cD_q^{oc}$ is a $2$-$(q{-}1,\frac{q}{2},\frac{q}{4})$ design.
\end{proof}

\smallskip

\begin{lemma}
	\label{lem:dualDoc}
	The dual of $\cD_q^{oc}$ is isomorphic to the design $(\F_q^\times,\{\ov{U_a^q}:a\in\F_q^\times\})$ where $\ov{U_a^q}:=\F_q{-}U_a^q=\{b\in\F_q:t^2+at+b\text{ is irreducible over }\F_q\,\}$. 
\end{lemma}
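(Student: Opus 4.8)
The plan is to reduce to Theorem~\ref{thm:1design}(iii) using the observation that, for an incidence structure $\cD=(\cP,\cB)$, forming the dual $\cD^{*}$ and forming the complementary design $\cD^{c}$ commute: under the obvious identification of underlying sets, $(\cD^{c})^{*}=(\cD^{*})^{c}$, since in either structure the point $B\in\cB$ is incident with the block $p\in\cP$ exactly when $p\notin B$. Taking $\cD=\cD_q^o$ and invoking Theorem~\ref{thm:1design}(iii), which provides an isomorphism $(\cD_q^o)^{*}\cong\cD_q^u$ via $\gamma\colon(O_a^q)^\times\mapsto a$, we obtain
$$(\cD_q^{oc})^{*}=((\cD_q^o)^{c})^{*}\cong((\cD_q^o)^{*})^{c}\cong(\cD_q^u)^{c}.$$

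It then remains to identify $(\cD_q^u)^{c}$ with $(\F_q^\times,\{\ov{U_a^q}:a\in\F_q^\times\})$. Since $0\in U_a^q$ for every $a$ (recall $U_a^q$ is an $\F_2$-subspace of $\F_q$, by the discussion after Definition~\ref{defn:parab}), the complement of the block $(U_a^q)^\times$ inside the point set $\F_q^\times$ is $\F_q^\times-(U_a^q)^\times=\F_q-U_a^q=\ov{U_a^q}$. Moreover, for $a\in\F_q^\times$ the polynomial $t^2+at+b$ has formal derivative $a\neq0$, so it is separable and hence either splits over $\F_q$ or is irreducible over $\F_q$; together with $U_a^q=\{b\in\F_q:t^2+at+b\ \text{splits over}\ \F_q\}$ this yields $\ov{U_a^q}=\{b\in\F_q:t^2+at+b\ \text{is irreducible over}\ \F_q\}$, matching the statement.

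The version I would actually write out in detail avoids the commutation remark and simply mirrors the proof of Theorem~\ref{thm:1design}(iii). The dual of $\cD_q^{oc}$ is $(\cB_q^{oc},\F_q^\times)$, in which the point $\ov{O_a^q}$ is incident with the block $v\in\F_q^\times$ iff $v\in\ov{O_a^q}$, i.e.\ iff $v\notin O_a^q$; for $v\in\F_q^\times$ the contrapositive of $(*)$ rewrites this as $a\notin(U_v^q)^\times$, equivalently (as $a\neq0$) as $a\in\ov{U_v^q}$. The map $\gamma\colon\cB_q^{oc}\to\F_q^\times$, $\ov{O_a^q}\mapsto a$, is a well-defined bijection, because by Lemma~\ref{lem:reg_act} the sets $O_a^q$ ($a\in\F_q^\times$), and hence their complements, are pairwise distinct; and $v\mapsto\ov{U_v^q}$ is a bijection of $\F_q^\times$ onto $\{\ov{U_a^q}:a\in\F_q^\times\}$ by Remark~\ref{rem:Hyper}(ii). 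The displayed equivalence says exactly that $\gamma$ carries the incidence of $(\cB_q^{oc},\F_q^\times)$ onto that of $(\F_q^\times,\{\ov{U_a^q}:a\in\F_q^\times\})$, so $\gamma$ is the desired design isomorphism.

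The argument is pure bookkeeping; the only points that need a moment's care are that $0$ always lies in $O_a^q$ and in $U_a^q$ (so that ``complement'' can be read consistently inside $\F_q^\times$), the bijectivity of the two indexing maps, and the separability remark identifying $\ov{U_a^q}$ as an irreducibility locus. I do not anticipate a genuine obstacle.
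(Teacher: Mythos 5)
Your proposal is correct, and the version you say you would actually write out is essentially the paper's own proof: the paper likewise identifies the dual point $\ov{O_a^q}$ with $a$ and observes that the block indexed by $b\in\F_q^\times$ becomes $\{a\in\F_q^\times: t^2+bt+a\text{ irreducible}\}=\ov{U_b^q}$, which is just your use of $(*)$ together with the quadratic splits-or-irreducible dichotomy. Your preliminary reduction via the commutation of dual and complement with Theorem~\ref{thm:1design}(iii) is a valid alternative framing, but it amounts to the same bookkeeping, so there is no substantive difference in approach.
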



\begin{proof}
By definition, the dual of $\cD_q^{oc}$ is $(\cB_q^{oc},\F_q^\times)$. Here, one can consider each block $b\in\F_q^\times$ as $b_d:=\{\ov{O_a^q}\in\cB_q^{oc}:a\in\F_q^\times \text{ and }b\in \ov{O_a^q}\}$.

Since $b\in\ov{O_a^q}$ if and only if $t^2+bt+a$ is irreducible over $\F_q$, one can identify the dual of $\cD_q^{oc}$ with the design for which 
the point set is $\F_q^\times$ and the block set 
is $\{B_b:b\in\F_q^\times\}$ where $B_b:=\{a\in\F_q^\times: t^2+bt+a\text{ is irreducible over }\F_q\}=\F_q{-}U_b^q=\ov{U_b^q}$ as claimed. 
\end{proof}


\section{\bf The full automorphism group of the design $\cD_q^u$}
\label{sec:automorphism}

For an $\F_2$-vector space $V$, we write $\GL_{\F_2}(V)$ the set of all 
$\F_2$-linear automorphisms of $V$. We write $\GL_n(q)$ the set of all invertible $n{\times}n$-matrices over $\F_q$. In this section, we prove the following result.

\begin{theorem}
	\label{them:AutD}
	Let $q:=2^n$ and $G:=\Aut(\cD_q^u)$ where $\cD_q^u:=(\F_q^\times,\cB_q^u)$ is the design given in Definition $\ref{defn:parab}$. The following results hold.
	\begin{itemize}
		\item[(i)]  $G=\GL_{\F_2}(\F_q)\cong\GL_n(2)$ where $\F_q$ is considered as an $\F_2$-vector space.
		\item[(ii)]  $G$ acts primitively on points and blocks respectively, i.e. $G$ acts transitively on points and blocks, and both stabilizers $\Stab_G(1)$, $\Stab_G(U_1^q)$ are maximal in $G$.
		\item [(iii)] Both the stabilizers $\Stab_G(1)$ and $\Stab_G(U_1^q)$ are  isomorphic to $2^{n-1}{:}\GL_{n-1}(2)$.\\ Whenever $n\geq3$, these two stabilizers are not conjugate in $G$.
		\item [(iv)] 
		$G$ is a primitive maximal subgroup of the alternating group ${\sf Alt}(\F_q^\times)$.
	\end{itemize}
\end{theorem}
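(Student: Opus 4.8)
The plan is to prove the four parts in order, since each relies on the previous ones. For (i), the containment $\GL_{\F_2}(\F_q)\subseteq G$ is immediate from Remark \ref{rem:Hyper}(ii): an $\F_2$-linear automorphism of $\F_q$ permutes the set of all $\F_2$-hyperplanes, hence permutes $\{U_a^q:a\in\F_q^\times\}$, and since it fixes $0$ it restricts to a permutation of $\F_q^\times$ carrying blocks $(U_a^q)^\times$ to blocks; so it lies in $\Aut(\cD_q^u)$. For the reverse containment, I would use the identification of $\cD_q^u$ with the projective space ${\sf PG}(n{-}1,2)$ noted after Theorem \ref{thm:main1}: the points are the nonzero vectors of $\F_q$, the blocks are the hyperplanes minus $0$, and a permutation of $\F_q^\times$ preserving this incidence structure must preserve collinearity of triples (three nonzero vectors are ``collinear'' iff they are linearly dependent over $\F_2$, iff they lie in a common block-complement configuration definable from the hyperplanes), so by the Fundamental Theorem of Projective Geometry every automorphism is induced by a semilinear map; over $\F_2$ semilinear equals linear, giving $G=\GL_{\F_2}(\F_q)\cong\GL_n(2)$. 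The main subtlety here is to phrase ``collinearity is recoverable from the block set'' cleanly — I would argue that two distinct points $u,v$ determine the set of blocks through both, whose intersection (together with $0$) is exactly the $\F_2$-span $\langle u,v\rangle$, so the rank-$2$ subspaces, and inductively all subspaces, are intrinsic to $\cD_q^u$.

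For (ii), transitivity on points and on blocks was already established (Remark \ref{rem:T_q}, or directly: $\GL_n(2)$ is transitive on nonzero vectors and on hyperplanes). Primitivity on points: $\Stab_G(1)$ is the stabilizer in $\GL_n(2)$ of a nonzero vector, which is the parabolic subgroup $2^{n-1}{:}\GL_{n-1}(2)$ (an affine group $\AGL_{n-1}(2)$-like stabilizer), and this is a maximal subgroup of $\GL_n(2)$ — this is the classical maximality of a point stabilizer in the natural action of $\GL_n(2)$ on $\F_2^n\setminus\{0\}$, equivalently the $2$-transitivity of $\GL_n(2)$ on points of ${\sf PG}(n{-}1,2)$ forcing the stabilizer to be maximal. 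Dually $\Stab_G(U_1^q)$ is a hyperplane stabilizer, also maximal by the analogous (dual) $2$-transitive action on hyperplanes. For (iii), I would identify both stabilizers explicitly as the two conjugacy classes of maximal parabolic subgroups $P_1$ and $P_{n-1}$ of $\GL_n(2)$; both have the stated shape $2^{n-1}{:}\GL_{n-1}(2)$, but $P_1$ and $P_{n-1}$ are interchanged by the inverse-transpose graph automorphism of $\GL_n(2)$, which is outer for $n\geq3$, hence they are not conjugate in $G$; for $n=2$ the graph automorphism is inner (indeed $\GL_2(2)\cong S_3$ and the two classes coincide), recovering the $n=2$ case of Theorem \ref{thm:main1}(iii).

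For (iv), the strategy is: first check $G\leq {\sf Alt}(\F_q^\times)$, i.e. every element of $\GL_n(2)$ acts as an even permutation on the $q-1$ nonzero vectors — for $n\geq3$ this follows since $\GL_n(2)$ is perfect (simple, in fact, being $\cong \PSL_n(2)$) so has no subgroup of index $2$, and for $n=2$ one checks $\GL_2(2)\cong S_3$ embeds in ${\sf Alt}(3)$... which fails, so the $n=2$ case needs the separate verification that the relevant action is on $q-1=3$ points with image $A_3$, or one notes $q-1=3$ and $|\GL_2(2)|=6$ forces checking directly; I would handle $n=2$ by hand and $n\geq3$ by simplicity. Then for maximality of $G$ in ${\sf Alt}(\F_q^\times)$: since $G$ is primitive, any overgroup $M$ with $G\leq M\leq {\sf Alt}(\F_q^\times)$ is also primitive of degree $q-1=2^n-1$, and I would invoke the O'Nan--Scott classification together with the classification of finite simple groups to enumerate primitive groups of degree $2^n-1$ containing $\GL_n(2)$; the point stabilizer $2^{n-1}{:}\GL_{n-1}(2)$ being maximal in $G$ and $G$ acting on $2^n-1 = $ (number of points of ${\sf PG}(n{-}1,2)$) points strongly constrains $M$, and one shows the only possibility is $M=G$ or $M={\sf Alt}(\F_q^\times)$ by checking that no primitive group of this degree properly lies strictly between $\PSL_n(2)$ and the alternating group (using tables of maximal subgroups of alternating groups, e.g. the Liebeck--Praeger--Saxl results). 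The main obstacle is precisely this last step: ruling out intermediate primitive groups requires either a careful appeal to the known classification of maximal subgroups of ${\sf Alt}_{q-1}$ of the relevant affine/almost-simple type, or an ad hoc argument showing that the action of $\GL_n(2)$ on nonzero vectors of $\F_2^n$ is already maximal — I expect the cleanest route is to cite that ${\sf PSL}_n(2)$ acting on the $\frac{2^n-1}{2-1}=2^n-1$ points of projective space is a maximal subgroup of ${\sf Alt}_{2^n-1}$ (a result in the Liebeck--Praeger--Saxl paper on maximal subgroups of symmetric and alternating groups), and then note $G=\GL_{\F_2}(\F_q)$ equals its own normalizer here since $\GL_n(2)=\Aut(\cD_q^u)$ is the \emph{full} automorphism group, so $G$ is not properly contained in the projective group beyond itself either.
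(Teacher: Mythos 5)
Your overall architecture matches the paper's quite closely: the inclusion $\GL_{\F_2}(\F_q)\subseteq G$ via Remark \ref{rem:Hyper}$(ii)$, the key observation that the blocks through two distinct points $u,v$ intersect (together with $0$) exactly in the span $\langle u,v\rangle$ (this is the paper's Lemma \ref{lem:2PIntBlks}), containment of $G$ in ${\sf Alt}(\F_q^\times)$ via simplicity of $\GL_n(2)$ (no index-$2$ subgroup), and maximality via the Liebeck--Praeger--Saxl classification of primitive subgroups of alternating groups. One comment on (i): once you have the intersection lemma you do not need the Fundamental Theorem of Projective Geometry at all. The paper simply notes that an automorphism $\tau$ carries $\{u,v,u+v\}$ bijectively onto $\{\tau(u),\tau(v),\tau(u)+\tau(v)\}$, forcing $\tau(u+v)=\tau(u)+\tau(v)$; this is shorter and, unlike FTPG (which needs projective dimension at least $2$, i.e.\ $n\geq3$), it also covers $n=2$, a case your sketch leaves open.

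The genuine gap is in (iii). From the fact that the two parabolics $P_1$ and $P_{n-1}$ are interchanged by the inverse-transpose automorphism, which is outer for $n\geq3$, you cannot conclude that they are non-conjugate: two subgroups swapped by an outer automorphism may perfectly well also be conjugate by an inner one (your own $n=2$ case is exactly such an instance), so the statement you need -- that no \emph{inner} automorphism carries one to the other -- is precisely what remains to be proved. You need a conjugation invariant that separates the two classes. The paper's argument: the block stabilizer $\Stab_G((U_1^q)^\times)$ has exactly two orbits on the point set $\F_q^\times$, of sizes $2^{n-1}-1$ and $2^{n-1}$, so for $n\geq3$ it fixes no point; on the other hand every conjugate $\Stab_G(a)^g=\Stab_G(a^g)$ of a point stabilizer fixes the point $a^g$. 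Hence the two stabilizers cannot be conjugate. (Alternatively one may invoke the standard fact that distinct standard parabolic subgroups of a group with a BN-pair are never conjugate, but that has to be cited as such, not deduced from the existence of the graph automorphism.) With that repair your (ii) and (iv) go through along the paper's lines; note also that the paper proves (iv) only for $n\geq3$, consistent with your observation that $n=2$ fails, since there $G={\sf Sym}(\F_4^\times)\not\leq{\sf Alt}(\F_4^\times)$.
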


We will prove this theorem in Lemmas \ref{lem:2PIntBlks}, \ref{lem:AutinGL}, \ref{lem:Stab}, and Proposition \ref{prop:subgpAn}.

\smallskip

\begin{remark}
	\label{rem:Du-Do}
{\rm	By Theorem \ref{thm:1design} (iii), almost all the results for $\Aut(\cD_q^u)$ in Theorem \ref{them:AutD} hold for $\Aut(\cD_q^o)$, except (i), i.e. $\Aut(\cD_q^o)\cong \GL_n(2)$ but $\Aut(\cD_q^o)\neq \GL_{\F_2}(\F_q)$.}
\end{remark}

\smallskip


\begin{lemma}
	\label{lem:2PIntBlks}
	In the design	$\cD_q^u=(\F_q^\times,\cB_q^u)$, for every $a\neq b\in\F_q^\times$, the intersection of all blocks containing $a,b$ equals $\{a,b,a+b\}$.
\end{lemma}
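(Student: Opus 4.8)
The claim is that for distinct $a,b\in\F_q^\times$, we have
\[
\bigcap_{\substack{v\in\F_q^\times\\ a,b\in U_v^q}} (U_v^q)^\times=\{a,b,a+b\}.
\]
The natural approach is to pass from the combinatorial picture to the linear-algebra picture via Remark \ref{rem:Hyper}(ii): the blocks $U_v^q$ are precisely the $\F_2$-hyperplanes of $\F_q$, so a block contains $a,b$ iff the corresponding hyperplane contains the two-dimensional $\F_2$-subspace $W:=\la a,b\ra_{\F_2}=\{0,a,b,a+b\}$ (here $a\ne b$ and $\mathsf{char}\,\F_q=2$ guarantee $\dim_{\F_2}W=2$). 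Thus the intersection of all such $U_v^q$ is the intersection of all $\F_2$-hyperplanes containing $W$.

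\textbf{Key steps.} First I would record that $a,b\in U_v^q$ if and only if $W\subseteq U_v^q$: the forward implication holds because $U_v^q$ is an $\F_2$-subspace, hence closed under addition and contains $0$; the reverse is trivial. Second, I would invoke the standard fact that the intersection of all hyperplanes of a finite-dimensional vector space containing a fixed subspace $W$ is exactly $W$ itself — equivalently, $W=\bigcap_{\phi\in W^{\perp}\setminus\{0\}}\ker\phi$ in the dual-space formulation, using that for $n\ge 2$ every proper subspace is an intersection of hyperplanes. Applying this with $W=\{0,a,b,a+b\}$ gives that $\bigcap_{a,b\in U_v^q} U_v^q = W$, and removing $0$ yields the asserted intersection $\{a,b,a+b\}$. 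One should also note there exists at least one such block: $W$ being $2$-dimensional and $n\ge 2$, it extends to a hyperplane, so the intersection is over a nonempty family and is genuinely $W$ rather than all of $\F_q$.

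\textbf{Anticipated obstacle.} There is no serious obstacle; the only point requiring a little care is the case $n=2$, where $W$ itself is a hyperplane, so there is exactly one block $U_v^q=W$ through $a$ and $b$, and the intersection trivially equals $W$ — the statement still holds, but one should make sure the argument does not implicitly assume more than one block. A second minor point is justifying "the intersection of all hyperplanes containing $W$ equals $W$"; if one prefers to avoid citing it, it follows by a short dimension count: pick any $w\in\F_q\setminus W$, extend $\{$basis of $W\}\cup\{w\}$ to an $\F_2$-basis, and the coordinate functional dual to $w$ gives a hyperplane containing $W$ but not $w$. I would present this inline rather than as a separate lemma.
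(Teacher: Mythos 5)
Your argument is correct and is essentially the paper's: identify the blocks with the $\F_2$-hyperplanes of $\F_q$ (Remark \ref{rem:Hyper}(ii)), observe that a hyperplane contains $a,b$ iff it contains $W:=\la a,b\ra=\{0,a,b,a+b\}$, and use that the intersection of all hyperplanes containing $W$ is $W$ itself; the paper phrases this last step via the hyperplanes of the quotient $\F_q/\la a,b\ra$, while you give the equivalent separating-functional/dimension count, which is the same idea.

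One correction to your ``anticipated obstacle'': the case you describe ($W$ itself a hyperplane, hence exactly one block through $a,b$) is $n=3$, not $n=2$. For $n=2$ we have $W=\la a,b\ra=\F_4$, the whole space, and no block contains both points at all (consistent with $\lambda=\frac{q}{4}-1=0$), so the intersection in the statement is over an empty family; the conclusion then only holds under the convention that this empty intersection is all of $\F_q^\times$, which for $q=4$ happens to equal $\{a,b,a+b\}$. So either adopt that convention explicitly or restrict your separating-functional step (which needs some $w\in\F_q\setminus W$) to $n\geq3$ and treat $n=2$ as a trivial separate case.
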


\begin{proof} For $a\neq b\in \F_q^\times$, we first identify all blocks in $\cB_q^u$ such that $a,b\in (U_u^q)^\times$.
	
	As $\la a,b\ra$ is a $2$-dimensional subspace of $\F_q$, by Lemma \ref{lem:Hyp}, it suffices to find all $\F_2$-hyperplanes of $\F_q$ containing $a,b$. These correspond to $\F_2$-hyperplanes of the quotient vector space $\F_q/\la a,b\ra$. Here, the intersection of these $\frac{q}{4}-1$ $\F_2$-hyperplanes of $\F_q/\la a,b\ra$ is the trivial subspace $\la a,b\ra$. This shows that the intersection of all blocks  containing $a,b$ in $\cD_q^u$ equals $\{a,b,a+b\}$.
\end{proof}	

\smallskip

\begin{remark}
	{\rm $(i)$ The above argument 
	gives an alternative 
	 proof for $\cD_q^u$ to be a $2$-design.
	
	$(ii)$ This also proves that for $a\neq b\in\F_q^\times$, there are exactly $\frac{q}{4}{-}1$ elements $u\in\F_q^\times$ such that $x^2+ux+a$ and $x^2+ux+b$ split over $\F_q$.}
\end{remark}

\smallskip

\begin{lemma}
	\label{lem:AutinGL}
	If $\F_q$ is considered as an $\F_2$-vector space, then $\Aut(\cD_q^u)= \GL_{\F_2}(\F_q)\cong\GL_n(2)$.
\end{lemma}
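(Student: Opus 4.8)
The plan is to show the two inclusions $\GL_{\F_2}(\F_q) \subseteq \Aut(\cD_q^u)$ and $\Aut(\cD_q^u) \subseteq \GL_{\F_2}(\F_q)$ separately, and then note that $\GL_{\F_2}(\F_q) \cong \GL_n(2)$ simply because $\F_q$ has dimension $n$ over $\F_2$. First, for the easy inclusion: by Remark~\ref{rem:Hyper}$(ii)$, the blocks of $\cD_q^u$ are exactly the sets $H^\times$ as $H$ ranges over all $\F_2$-hyperplanes of $\F_q$. An element $g \in \GL_{\F_2}(\F_q)$ permutes the $\F_2$-hyperplanes of $\F_q$ among themselves and fixes $0$, hence it permutes the point set $\F_q^\times$ and sends each block $H^\times$ to the block $(gH)^\times$; therefore $g \in \Aut(\cD_q^u)$. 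This gives $\GL_{\F_2}(\F_q) \le \Aut(\cD_q^u)$.

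The substantial direction is the reverse inclusion. Let $\sigma \in \Aut(\cD_q^u)$; I want to show $\sigma$ is $\F_2$-linear on $\F_q$, where we extend $\sigma$ to $\F_q$ by setting $\sigma(0) = 0$. Since $\sigma$ is a bijection of $\F_q^\times$ permuting the blocks, it permutes the $\F_2$-hyperplanes of $\F_q$ (each block being $H^\times$ for a unique hyperplane $H$, by Remark~\ref{rem:Hyper}$(ii)$), and hence also permutes the intersections of families of hyperplanes, i.e. all $\F_2$-subspaces of $\F_q$. The key tool is Lemma~\ref{lem:2PIntBlks}: for $a \ne b$ in $\F_q^\times$, the intersection of all blocks containing $a$ and $b$ is exactly $\{a, b, a+b\}$, the nonzero part of the $2$-dimensional subspace $\la a, b\ra$. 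Applying $\sigma$, which permutes blocks, the intersection of all blocks containing $\sigma(a)$ and $\sigma(b)$ equals $\sigma(\{a, b, a+b\}) = \{\sigma(a), \sigma(b), \sigma(a+b)\}$. But by Lemma~\ref{lem:2PIntBlks} applied to the pair $\sigma(a), \sigma(b)$, that same intersection equals $\{\sigma(a), \sigma(b), \sigma(a)+\sigma(b)\}$. Comparing the two sets, we get $\sigma(a+b) = \sigma(a) + \sigma(b)$ for all $a \ne b$ in $\F_q^\times$. The remaining cases are immediate: if $a = b$ then $a + b = 0$ and $\sigma(a+b) = \sigma(0) = 0 = \sigma(a) + \sigma(b)$ in characteristic $2$; and the cases involving $0$ are trivial by the convention $\sigma(0) = 0$. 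Thus $\sigma$ is additive on $\F_q$, and additivity over $\F_2$ is exactly $\F_2$-linearity, so $\sigma \in \GL_{\F_2}(\F_q)$.

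Combining, $\Aut(\cD_q^u) = \GL_{\F_2}(\F_q)$, and choosing an $\F_2$-basis of $\F_q$ identifies this group with $\GL_n(2)$. The main obstacle is the reverse inclusion, and within it the crucial move is recognizing that Lemma~\ref{lem:2PIntBlks} pins down the $2$-dimensional subspaces of $\F_q$ purely in terms of the block structure, so that any design automorphism must respect the operation $(a,b) \mapsto a+b$; one must be slightly careful that the lemma is stated for $a \ne b$ and handle the degenerate cases by hand using $\mathrm{char}(\F_q) = 2$ and the convention $\sigma(0)=0$. Everything else — the forward inclusion and the final identification with $\GL_n(2)$ — is routine.
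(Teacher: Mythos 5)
Your proposal is correct and follows essentially the same route as the paper: the easy inclusion via Remark~\ref{rem:Hyper}$(ii)$ (linear maps permute hyperplanes), and the reverse inclusion by applying Lemma~\ref{lem:2PIntBlks} to both $\{a,b\}$ and $\{\sigma(a),\sigma(b)\}$ to force $\sigma(a+b)=\sigma(a)+\sigma(b)$, with the degenerate cases handled by the convention $\sigma(0)=0$. No gaps.
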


\begin{proof}
	\underline{$\Aut(\cD_q^u)\subseteq \GL_{\F_2}(\F_q)$:}
	For $\tau\in\Aut(\cD_q^u)$, let $\tau(0):=0$. As $\tau$ is a bijection of $\F_q$, it suffices to show that $\tau$ is an $\F_2$-linear transformation, i.e. for $u,v\in\F_q$, $\tau(u+v)=\tau(u)+\tau(v)$.
	
	It is obvious if $u$ or $v$ equals $0$, and if $u=v$. We assume that $u\neq v\in\F_q^\times$. For $x,y\in\F_q^\times$, set $B_{x,y}:=\{(U_a^q)^\times\in\cB_q^u:x,y\in (U_a^q)^\times\}$. By Lemma \ref{lem:2PIntBlks} for $\{u,v\}$ and $\{\tau(u),\tau(v)\}$, we have $$\ds S_{u,v}:=\bigcap_{U\in B_{u,v}}U=\{u,v,u+v\} \text{ and } S_{\tau(u),\tau(v)}:=\bigcap_{U\in B_{\tau(u),\tau(v)}}U=\{\tau(u),\tau(v),\tau(u+v)\}.$$
	Since $\tau\in\Aut(\cD_q^u)$, we have $\tau:B_{u,v}\rightarrow B_{\tau(u),\tau(v)}$ is bijective, and so is $\tau:S_{u,v}\rightarrow S_{\tau(u),\tau(v)}$. As $u,v,u+v$ are the only elements in $S_{u,v}$ and so are $\tau(u),\tau(v),\tau(u)+\tau(v)$ in $S_{\tau(u),\tau(v)}$, it must follow that $\tau(u+v)=\tau(u)+\tau(v)$. Thus, $\tau\in \GL_{\F_2}(\F_q)$.
	
	\smallskip
	
	\underline{$\GL_{\F_2}(\F_q)\subseteq\Aut(\cD_q^u)$:} For each 
	 $\tau\in\GL_{\F_2}(\F_q)$, by Remark \ref{rem:Hyper} $(ii)$, we have $\tau((U_a^q)^\times)\in\cB_q^u$ 
	  for all $(U_a^q)^\times\in\cB_q^u$ since the image of every $\F_2$-hyperplane is also an $\F_2$-hyperplane in $\F_q$. This shows that $\tau|_{\F_q^\times}\in\Aut(\cD_q^u)$, and thus the claim holds.	
\end{proof}

\smallskip

\begin{example}
	\label{ex:Fano}
	{\rm When $n=3$, we have $\cD_8^u$  and $\cD_8^o$ are $2$-$(7,3,1)$ designs, both of which are isomorphic to the Fano plane, and $\Aut(\cD_8^u)=\GL_{\F_2}(\F_8)\cong \GL_3(2)\cong\SL_2(7)$.}
\end{example}

\smallskip

\begin{remark}
	\label{rem:AutO&U}
	 {\rm The fact that the dual of $\cD_q^u$ is isomorphic to $\cD_q^o$ does not imply $\Aut(\cD_q^o){=}\Aut(\cD_q^u)$. Here we can only obtain that $\Aut(\cD_q^o)$ is isomorphic to $\GL_{\F_2}(\F_q)$. One can use Magma \cite{magma} to confirm for $n=3$ that $\Aut(\cD_8^o)\neq \Aut(\cD_8^u)$; in detail, $|\Aut(\cD_8^o)\cap  \Aut(\cD_8^u)| =21$.}
\end{remark}

\smallskip

\begin{remark}
	\label{rem:SingerTorus}
	{\rm Recall the multiplicative subgroup $T_q:=\{\nu_a:a\in\F_q^\times\}\leq\Aut(\cD_q^o)\cap\Aut(\cD_q^u)$ in Remark \ref{rem:T_q}. Since $\Aut(\cD_q^u)=\GL_{\F_2}(\F_q)\cong\GL_n(2)$, the subgroup $T_q\cong(\F_q^\times,\cdot)$ is a maximal nonsplit torus of order $q-1$, also known as a Singer torus of $\GL_n(2)$.}
\end{remark}

\smallskip

\begin{remark}
	\label{rem:Frobenius}
	{\rm Consider the Frobenius map $\theta:\F_q\rightarrow\F_q,\,x\mapsto x^2$. For every $a\in\F_q^\times$, we have $\theta(U_a^q)=U_{a^2}^q$ and $\theta(O_a^q)=O_{a^2}^q$. So the cyclic group $\la\theta\ra$ is a subgroup of $\Aut(\cD_q^u)$ and $\Aut(\cD_q^o)$. Notice that $\theta\in\Stab_{\Aut(\cD_q^u)}((U_1^q)^\times)\cap\Stab_{\Aut(\cD_q^o)}((O_1^q)^\times)$.

	As $q=2^n$, the order of $\theta$ equals $n$. So the normalizer subgroup $N_{\GL_{\F_2}(\F_q)}(T_q)=\la \theta,T_q\ra$ and the Weyl group $W(T_q):=N_{\GL_{\F_2}(\F_q)}(T_q)/T_q\cong C_n$.}

\end{remark}


\smallskip


\begin{lemma}
	\label{lem:Stab}
	With $q:=2^n$ where $n\geq 2$, let $G:=\Aut(\cD_q^u)$, $a\in\F_q^\times$ and $b\in\cB_q^u$.
	The following results hold.
	\begin{itemize}
		\item [(i)] $G$ acts primitively on both points and blocks 
		where the stabilizers $\Stab_G(a)$ and $\Stab_G(b)$ are isomorphic to $2^{n-1}{:}\GL_{n-1}(2)$
		\item[(ii)] Under the action of $\Stab_G(b)$ on the point set $\F_q^\times$, there are exactly $2$ orbits whose sizes are $2^{n-1}-1$ and $2^{n-1}$.
		\item[(iii)] If $n\geq 3$, then the stabilizers $\Stab_G(a)$ and $\Stab_G(b)$ are not conjugate in $G$. 		
 	\end{itemize}
\end{lemma}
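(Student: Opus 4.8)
The plan is to identify $G=\GL_{\F_2}(\F_q)$ with $\GL_n(2)$ (via Lemma~\ref{lem:AutinGL}) and recognize the point stabilizer and block stabilizer as the two ``extreme'' maximal parabolic subgroups. Concretely, I would first observe that $\Stab_G(a)$, acting on the $\F_2$-vector space $V:=\F_q$, is the subgroup of $\GL(V)$ fixing the nonzero vector $a$; this is the affine-type maximal parabolic, an extension of the group of transvections/translations $\Hom(V/\la a\ra,\la a\ra)\cong 2^{n-1}$ by $\GL(V/\la a\ra)\cong\GL_{n-1}(2)$, so $\Stab_G(a)\cong 2^{n-1}{:}\GL_{n-1}(2)$. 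Dually, since blocks of $\cD_q^u$ are exactly the $\F_2$-hyperplanes of $V$ (Remark~\ref{rem:Hyper}(ii)), $\Stab_G(b)$ is the stabilizer in $\GL(V)$ of a hyperplane $H=U_1^q$; this is the other end-node maximal parabolic, an extension of $\Hom(V/H,H)\cong 2^{n-1}$ by $\GL(H)\cong\GL_{n-1}(2)$, again $\cong 2^{n-1}{:}\GL_{n-1}(2)$. For maximality and primitivity I would invoke the Borel--Tits / BN-pair theory for $\GL_n(2)=\SL_n(2)$: maximal parabolics are precisely the maximal subgroups containing a fixed Borel subgroup, hence maximal in $G$, and a point (resp. block) stabilizer being maximal is equivalent to the action on points (resp. blocks) being primitive.

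For part~(ii) I would compute the orbits of $\Stab_G(H)$ on $V^\times=\F_q^\times$ directly: a linear map fixing the hyperplane $H$ setwise either fixes $H$ as a set and permutes $V\setminus H$; since $\GL(H)$ acts transitively on $H\setminus\{0\}$ (of size $2^{n-1}-1$) and the stabilizer acts transitively on the affine complement $V\setminus H$ (of size $2^{n-1}$), there are exactly two orbits on $\F_q^\times$, of sizes $2^{n-1}-1$ and $2^{n-1}$, as claimed. (For $n=2$ these sizes are $1$ and $2$, consistent with the $n=2$ case of Theorem~\ref{thm:main1}(iii).)

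The crux is part~(iii): showing $\Stab_G(a)$ and $\Stab_G(b)$ are not conjugate in $G$ when $n\geq 3$. The cleanest argument is to note that conjugacy of these two subgroups in $G=\GL(V)$ would force their natural permutation actions on $\F_q^\times$ (or equivalently their actions on $V$) to be permutation-isomorphic, hence to have the same orbit lengths. But the point stabilizer $\Stab_G(a)$ fixes the vector $a$, so it has a fixed point on $V^\times$, i.e. an orbit of length $1$; meanwhile, by part~(ii) the block stabilizer $\Stab_G(b)$ has orbits of lengths $2^{n-1}-1$ and $2^{n-1}$ on $V^\times$, and when $n\geq 3$ neither of these equals $1$. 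Hence the two stabilizers cannot be conjugate. (Equivalently, in Lie-theoretic language, these are the parabolics associated to the two end nodes of the $\mathrm{A}_{n-1}$ diagram, which are interchanged by the graph automorphism of $\SL_n(2)$ but not by an inner automorphism once $n\geq 3$, since the diagram automorphism is nontrivial; this matches the isomorphism $\cD_q^o\cong(\cD_q^u)^*$ and Remark~\ref{rem:AutO&U}.) I would present the orbit-length argument as the main proof, as it is self-contained given part~(ii).

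The step I expect to require the most care is making the identifications of $\Stab_G(a)$ and $\Stab_G(b)$ with the stated group $2^{n-1}{:}\GL_{n-1}(2)$ fully rigorous — in particular pinning down the normal elementary abelian $2$-subgroup (transvection group) and the complement, and citing or proving maximality of these parabolics in $\GL_n(2)$; everything after that (orbit counts, non-conjugacy) is short.
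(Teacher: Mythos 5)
Your proposal is correct and follows essentially the same route as the paper: identify $G$ with $\GL_n(2)$ via Lemma~\ref{lem:AutinGL}, recognize $\Stab_G(a)$ and $\Stab_G(b)$ as end-node maximal parabolics isomorphic to $2^{n-1}{:}\GL_{n-1}(2)$ (whence primitivity), compute the two orbits of the block stabilizer on points, and deduce non-conjugacy for $n\geq 3$ exactly as the paper does, from the fact that any conjugate of a point stabilizer has an orbit of size $1$ while $\Stab_G(b)$ has orbits of sizes $2^{n-1}-1$ and $2^{n-1}$. The only presentational difference is that you handle the block stabilizer directly as the stabilizer of an $\F_2$-hyperplane (via Remark~\ref{rem:Hyper}(ii)), whereas the paper works with explicit matrices for the point stabilizer and obtains the block side by appealing to the duality with $\cD_q^o$; both versions are sound.
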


\begin{proof}
For parts (i)-(ii)	we apply Lemma \ref{lem:AutinGL} with $G:=\Aut(\cD_q^u)=\GL_{\F_2}(\F_q)$. 	
	We consider $V:=\F_q$ as an $\F_2$-vector space of dimension $n$ and choose an $\F_2$-basis $\cC$ for $V$ such that the coordinate vector $[a]_\cC=[0,0,\ldots,0,1]^T$, a column vector. For every $\sigma\in  G$, with respect to
	 the basis $\cC$, the coordinate vector $[\sigma(a)]_\cC=[\sigma]_\cC\,[a]_\cC$ where 
	 $[\sigma]_\cC:=[\sigma]_{\cC,\cC}\in\GL_n(2)$ is the matrix form of $\sigma$ with respect to
	 the bases from $\cC$ to $\cC$. Thus, the matrix subgroup $\left\{[\sigma]_{\cC}\mid\sigma\in\Stab_G(a)\right\}$ is  $\left\{\begin{bmatrix}
		A&{\bf 0}\\ * &1
	\end{bmatrix}\mid A\in\GL_{n-1}(2)\right\}$, which is a maximal parabolic subgroup of $G$ and isomorphic to $2^{n-1}{:}\GL_{n-1}(2)$. This also confirms that $G$ acts primitively on the points $\F_q^\times$.
		
	In terms of coordinate vectors with respect to $\cC$: The orbit of $[a]_\cC$ is $\{[x,1]^T:x\in\F_2^{n-1}\}$ of size $2^{n-1}$. The orbit 
 with representative $[1,0,\ldots,0]^T$ is the set $\{[x,0]^T: 0\neq x\in \F_2^{n-1}\}$ of size $2^{n-1}-1$. Thus, $\Stab_G(a)$ acts on the points into these two orbits.
	
	Since $\cD_q^o$ and $\cD_q^u$ are dual to each other, using the same argument for the primitive action of $\Aut(\cD_q^o)$ on the points, we obtain the primitive action of $\Aut(\cD_q^u)$ on the blocks and $\Stab_G(b)$ is isomorphic to $2^{n-1}{:}\GL_{n-1}(2)$ for every block $b\in\cB_q^u$.

	\smallskip
	
	(iii) 
	For every $g\in G$, we have $\Stab_G(a)^g=\Stab_G(a^g)$, i.e. $\{a^g\}$ is an orbit of $\Stab_G(a)^g$ under the action of $\Stab_G(a)^g$ on the points.	
	 As $n\geq 3$, $\Stab_G(b)$ and $\Stab_G(a)^g$ have different orbit sizes on the point set, and this implies that these two groups are not conjugate in $G$.
\end{proof}

\smallskip

\begin{remark}
	\label{rem:n=2}
	{\rm If $q=2^2$, then $G:=\Aut(\cD_q^u)=\GL_{\F_2}(\F_4)\cong\GL_2(2)\cong S_3$ and $\Stab_G(1)$ and $\Stab_G(U_1^q)$ are of order $2$. Thus, these two stabilizers are conjugate in $G$.}
\end{remark}

\smallskip

\begin{remark}
	\label{rem:DoOrbits}
	{\rm
	By Theorem \ref{thm:1design} (iii) and Lemma \ref{lem:Stab}, under the action of $\Stab_{\Aut(\cD_q^o)}(1)$ on the block set $\cB_q^o$, there are exactly 2 orbits of sizes $2^{n-1}-1$ and $2^{n-1}$.}
\end{remark}

\smallskip
\begin{proposition}
	\label{prop:subgpAn}
	For $q:=2^n$ where $n\geq 3$, $\Aut(\cD_q^u)$ and $\Aut(\cD_q^o)$ are primitive maximal subgroups of the alternating group ${\sf Alt}(\F_q^\times)$.
\end{proposition}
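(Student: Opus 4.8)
The plan is to identify $G := \Aut(\cD_q^u) \cong \GL_n(2)$ with a point stabilizer that is a maximal parabolic, and then invoke the classification of maximal subgroups of the symmetric and alternating groups (the O'Nan--Scott theorem) to pin down where $G$ can sit inside $\mathrm{Sym}(\F_q^\times)$ and $\mathrm{Alt}(\F_q^\times)$. First I would record that $G$ acts faithfully and primitively on the $v = q-1 = 2^n-1$ points (Lemma~\ref{lem:Stab}), so $G$ is contained in some maximal subgroup $M$ of $\mathrm{Sym}(\F_q^\times)$ or $\mathrm{Alt}(\F_q^\times)$; since $G$ is simple for $n \ge 3$ and of even order, $G \le \mathrm{Alt}(\F_q^\times)$ automatically (every element of $G = \GL_n(2)$ is a product of transvections, or more simply: a simple group has no subgroup of index $2$, so its image in $\mathrm{Sym}/\mathrm{Alt} = C_2$ is trivial). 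So it suffices to show $G$ is maximal in $\mathrm{Alt}(\F_q^\times)$, and the claim for $\cD_q^o$ follows from Theorem~\ref{thm:1design}(iii) since the two automorphism groups are permutation-isomorphic on their respective point sets (this needs a word: duality gives an isomorphism of $G$ with $\Aut(\cD_q^o)$ identifying the point action of one with the block action of the other, and by Lemma~\ref{lem:Stab} the block action of $\Aut(\cD_q^o)$ is again primitive of the same type, so maximality transfers).

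Next I would run the O'Nan--Scott dichotomy on a hypothetical $M$ with $G < M < \mathrm{Alt}(\F_q^\times)$, $M$ maximal and hence primitive. The degree is $2^n - 1$, which is odd; this immediately kills the affine type (degree a prime power $p^d$ with $p$ odd would force $2^n-1 = p^d$, and more to the point an affine group has a regular normal elementary abelian subgroup, whereas $\GL_n(2)$ has trivial solvable radical and cannot be contained in $\mathrm{AGL}_d(p)$ in a way that is itself not all of $\mathrm{AGL}$ — but the cleanest route is: $2^n-1$ odd rules out product action and wreath/imprimitive-on-a-subproduct types whose degrees are proper powers or not of the right shape, and rules out affine type of characteristic $2$). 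For the remaining cases — almost simple, diagonal, product action — I would use that $M$ must contain $G = \GL_n(2)$ acting with the stated primitive structure and that $|M| $ divides $ (2^n-1)!/2$. The key structural fact to exploit is that $G$ is already primitive with a maximal parabolic point stabilizer of order $2^{n-1}\,|\GL_{n-1}(2)|$, so any $M \supsetneq G$ would have point stabilizer $M_\alpha \supsetneq G_\alpha$ with $G_\alpha$ maximal in $G$, forcing $M = G \cdot M_\alpha$ and $M_\alpha \cap G = G_\alpha$; counting indices, $[M:G] = [M_\alpha : G_\alpha]$, so I must show no proper overgroup of the parabolic $2^{n-1}{:}\GL_{n-1}(2)$ inside a larger primitive group is possible.

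The decisive step, and the one I expect to be the main obstacle, is eliminating the almost simple case: showing there is no almost simple primitive group of degree $2^n - 1$ properly containing $\GL_n(2)$ in its action on the $2^n-1$ points (nonzero vectors of $\F_2^n$, equivalently points of $\mathrm{PG}(n-1,2)$). Here I would appeal to the known classification of primitive permutation groups of degree $2^n-1$, or more specifically to the list of maximal subgroups of $\mathrm{Alt}(2^n-1)$ and $\mathrm{Sym}(2^n-1)$: the subgroup $\GL_n(2) = \mathrm{PSL}_n(2)$ acting on projective points is on the standard list (Liebeck--Praeger--Saxl, or the earlier work classifying which $\mathrm{PSL}_d(q)$-actions are maximal) as a maximal subgroup of $\mathrm{Alt}$ precisely because its point stabilizer is a maximal parabolic and the action is not contained in any larger primitive action — the only potential overgroup would come from an exceptional inclusion, and for degree $2^n-1$ with $n \ge 3$ there is none except possibly sporadic small coincidences (e.g. $n=3$, degree $7$, where $\GL_3(2) \cong \mathrm{PSL}_2(7)$ and one checks directly that $\mathrm{PSL}_3(2)$ is maximal in $\mathrm{Alt}(7)$; degree $15$, $n=4$, where $\GL_4(2)\cong \mathrm{Alt}(8)$ acts on $15$ points and one checks $\mathrm{Alt}(8)$ is maximal in $\mathrm{Alt}(15)$). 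For the diagonal and product-action cases, the degree constraint $2^n-1$ being odd and not a proper power larger than needed (together with $2^n - 1$ never being of the form $m^k$ with $k \ge 2$ for $n \ge 3$ except $n=3$? — actually $2^n-1$ is a Mersenne number and is a perfect power only in trivial cases by Catalan/Mihailescu, so product action of degree $m^k$ is impossible) handles these quickly. I would organize the final write-up as: (1) reduce to maximality of $G$ in $\mathrm{Alt}$; (2) cite the O'Nan--Scott theorem and the Aschbacher--O'Nan--Scott classification of maximal subgroups of alternating groups; (3) check the degree $2^n-1$ admits no competing primitive type; (4) conclude, handling the small cases $n=3,4$ by the exceptional isomorphisms noted above.
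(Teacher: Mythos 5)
Your proposal is correct in substance, and at the decisive point it rests on exactly the same external input as the paper, but it reaches it by a more roundabout route. The paper's proof is short: show $G\le{\sf Alt}(\F_q^\times)$ by simplicity of $G\cong\GL_n(2)=\SL_n(2)$ for $n\ge3$ (your index-$2$ argument is the same, stated more cleanly), note that Lemma \ref{lem:Stab} already gives primitivity of \emph{both} $\Aut(\cD_q^u)$ and $\Aut(\cD_q^o)$ on $\F_q^\times$ (so no duality-transfer step is needed -- and indeed ``maximality transfers'' is slightly loose, since the two actions are not permutation isomorphic for $n\ge3$; what you really do is rerun the same argument for the hyperplane action, which is fine), and then apply the Main Theorem of \cite{LPS} directly: a primitive almost simple subgroup of type (f) is maximal in ${\sf Alt}$ or ${\sf Sym}$ unless it occurs in the explicit table of exceptions. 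That formulation makes your entire O'Nan--Scott analysis of a hypothetical overgroup $M$ -- ruling out affine, diagonal and product types via the degree $2^n-1$ being odd and not a proper power -- unnecessary, although your arithmetic there is essentially sound. Where you should be careful is the almost simple case, which is the only case with content: saying $\PSL_n(2)$ is maximal ``because its point stabilizer is a maximal parabolic and the action is not contained in any larger primitive action'' is circular as stated; the honest justification is precisely the \cite{LPS} theorem together with a check that $(\PSL_n(2),\,2^n-1)$ (in either the point or the hyperplane action) does not appear in their exception table. Neither the paper nor you carries out that table check explicitly, but you at least flag the small cases $n=3$ ($\PSL_3(2)<{\sf A}_7$ maximal) and $n=4$ ($\GL_4(2)\cong{\sf A}_8<{\sf A}_{15}$ maximal), which is a reasonable supplement. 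In short: same key citation, but the paper uses it as a one-step maximality criterion, while you use it as the endpoint of an overgroup analysis that could be deleted without loss.
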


\begin{proof}
	Let $G$ be either $\Aut(\cD_q^u)$ or $\Aut(\cD_q^o)$. It is clear that $G$ is a subgroup of the symmetric group ${\sf Sym}(\F_q^\times)$ due to the action of $G$ on the point set $\F_q^\times$. 	Now we show that $G$ is a subgroup of the alternating group ${\sf Alt}(\F_q^\times)$.

	By Lemma \ref{lem:AutinGL}, $G\cong\GL_n(2)=\SL_n(2)$, $G$ is simple since $n\geq 3$. By the simplicity of $G$, we have $|G:G\cap {\sf Alt}(\F_q^\times)|>2$ if $G$ is not a subgroup of ${\sf Alt}(\F_q^\times)$. 
	From $|{\sf Sym}(\F_q^\times)|=2|{\sf Alt}(\F_q^\times)|$ and $|G{\sf Alt}(\F_q^\times)|=|G|{\cdot}|{\sf Alt}(\F_q^\times)|/|G\cap {\sf Alt}(\F_q^\times)|$
	, we have $G$ is a subgroup of ${\sf Alt}(\F_q^\times)$.
	
	By Lemma \ref{lem:Stab}, $G$ is a primitive subgroup of ${\sf Alt}(\F_q^\times)$. By \cite[Main Theorem]{LPS}, $G$ is almost simple of type (f), thus, $G$ is maximal in ${\sf Alt}(\F_q^\times)$ as claimed.
\end{proof}

\smallskip

\begin{remark}
	When $q=2^2$, we have $\Aut(\cD_q^u)$ and $\Aut(\cD_q^o)$ are isomorphic to $\GL_2(2)\cong S_3$, and ${\sf Sym}(\F_4^\times)\cong S_3$. Thus, $\Aut(\cD_q^u)=\Aut(\cD_q^o)={\sf Sym}(\F_4^\times)$.
\end{remark}

\smallskip
With a few tests for small $q:=2^n$ where $n\geq3$, we formulate two questions to study further the structures of these two subgroups in the context of the symmetric group ${\sf Sym}(\F_q^\times)$.

\begin{question}
	\label{que:ONanScott}	
{\rm  
	For $q:=2^n$ where $n\geq3$, do the following property hold?
	\begin{itemize}
		\item [(i)] $\Aut(\cD_q^u)\cap\Aut(\cD_q^o)=\la \theta,T_q\ra$, as mentioned in Remark \ref{rem:Frobenius}.		
		\item [(ii)]
		$\Aut(\cD_q^u)$ and $\Aut(\cD_q^o)$ are not conjugate in ${\sf Alt}(\F_q^\times)$, but they are conjugate in ${\sf Sym}(\F_q^\times)$.	
	\end{itemize}
}	
\end{question}


\section{\bf Acknowledgments} \label{acknowledge}
The first and the second author acknowledge research support by the National Research Foundation of South Africa (Grant Number 151764 and Grant Number CPRR23041894647 respectively). The first author would like to thank the Isaac Newton Institute for the support during the visit in 2022.

\medspace
\end{document}